\theoremstyle{plain}
\newtheorem{theorem}{Theorem}[section]
\newtheorem{lemma}[theorem]{Lemma}
\newtheorem{prop}[theorem]{Proposition}
\theoremstyle{definition}
\theoremstyle{remark}
\newcommand{\bbR}{\mathbb{R}}
\newcommand{\bbC}{\mathbb{C}}
\newcommand{\bbN}{\mathbb{N}}
\newcommand{\eitheta}{e^{i\theta}}
\title[]{Bounded Connected Components of Polynomial Lemniscates}
\author[]{Adam Kraus $\&$ Brian Simanek}
\date{}
\begin{document}
\maketitle

\begin{abstract}
We consider families of polynomial lemniscates in the complex plane and determine if they bound a Jordan domain.  This allows us to find examples of regions for which we can calculate the projection of $\bar{z}$ to the Bergman space of the bounded region.  Such knowledge has applications to the calculation of torsional rigidity.
\end{abstract}

\vspace{4mm}

\footnotesize\noindent\textbf{Keywords:} Bergman Projection, Polynomial Lemniscates, Torsional Rigidity

\vspace{2mm}

\noindent\textbf{Mathematics Subject Classification:} Primary 30E10; Secondary 74P10

\vspace{2mm}

\normalsize

\section{Introduction}\label{Intro}

Consider a bounded Jordan domain $\Omega$ in the complex plane $\bbC$.  We define the Bergman space $A^2(\Omega)$ to be the space of functions that are holomorphic in $\Omega$ and which have finite $L^2$-norm with respect to area measure on $\Omega$.  The \textit{Bergman Analytic Content} of $\Omega$ (denoted $\sigma(\Omega)$) is the distance from $\bar{z}$ to $A^2(\Omega)$ in the space $L^2(\Omega,dA)$.  Our motivation for considering this quantity is a result from \cite{FL17}, which states that if $\Omega$ is simply connected, then $\sigma(\Omega)^2$ is equal to the torsional rigidity of $\Omega$ (see \cite{Pol} for a definition and discussion of torsional rigidity).

One way to go about calculating $\sigma(\Omega)$ is to find the projection of $\bar{z}$ to $A^2(\Omega)$ in $L^2(\Omega)$.  A result of Fleeman and Khavinson shows that the projection of $\bar{z}$ to $A^2(\Omega)$ is $F'(z)$, where $\mbox{Re}[F(z)]=|z|^2/2$ on the boundary of $\Omega$ (see \cite{FK}).  Thus, one can find examples of regions $\Omega$ where one can calculate $\sigma(\Omega)$ exactly by considering regions whose boundary is part of the set $\tilde{\Gamma}_F:=\{z:\mbox{Re}[F(z)]=|z|^2/2\}$, as long as $F$ is holomorphic on $\Omega$.  If $F$ is a polynomial, then this last condition is trivially satisfied, so one need only find examples where the set $\tilde{\Gamma}_F$ bounds a Jordan domain.  

This task was taken up in \cite{FK}, where several examples are considered.  Since the Bergman projection of $\bar{z}$ is $F'(z)$, not $F(z)$, there is no loss of generality by instead considering the set
\[
\Gamma_F:=\{z:\mbox{Re}[F(z)+1]=|z|^2\}=\{z:\mbox{Re}[F(z)]=|z|^2-1\},
\]
which is slightly easier for some computations. It was asked in \cite{FK} to find values of $C$ so that $\Gamma_{Cz^n}$ bounds a Jordan domain.  This question was mostly answered in \cite{FL17}, where Fleeman and Lundberg found precise values of $C$ that are sufficient to guarantee that $\Gamma_{Cz^n}$ bounds a Jordan domain. We will complete the result from \cite{FL17} by showing that their result is both necessary and sufficient and also by showing that for these particular values of $C$, there is exactly one Jordan domain bounded by the set $\Gamma_{Cz^n}$.

Our main results will generalize the result from \cite{FL17} in several ways.  First, we will consider the sets
\[
\Gamma_{n,j}(C,k):=\{z:\mbox{Re}[Cz^n+z^j]-|z|^2+k=0\},
\]
where we want to think of $j$ and $n$ as fixed.  Our goal is to find the relationship between $C$ and $k$ that is necessary and sufficient to ensure the existence of a unique Jordan domain whose boundary is part of the set $\Gamma_{n,j}(C,k)$.  Our results on this problem are presented in Section \ref{results} below. In some cases we are able to accomplish this goal exactly, while in the most general case we have only partial results.  In Section \ref{ex}, we work through the details of an example where the torsional rigidity of the bounded region can be calculated analytically.

\section{Results}\label{results}

We will be analyzing the sets $\Gamma_{n,j}(C,k)$ defined in the previous section.  For notational convenience, we also define the functions
\[
f_{n,j,C,k}(r,\theta)=Cr^n\cos(n\theta)+r^j\cos(j\theta)-r^2+k,
\]
so that
\[
\Gamma_{n,j}(C,k)=\{r\eitheta:f_{n,j,C,k}(r,\theta)=0\}.
\]
The following lemma will be useful in our later analysis.

\begin{lemma}\label{surround}
Consider $C\in\bbR$, $k>0$ and $j\in\{0,1,2\}$ and $n\in\bbN$ with $n>j$.  If the set
    \[
    \Gamma_{n,j}(C,k)=\{z:\textrm{Re}\left[Cz^n+z^j\right]-|z|^2+k=0\}
    \] 
bounds a Jordan domain $\Omega$, then $0\in\Omega$.
\end{lemma}

\begin{proof}
Let us think of the function $f_{n,j,C,k}(r,\theta)$ as a polynomial function of $r$ with $\theta$ fixed.  Since $k>0$, the hypotheses imply that the coefficients of $f_{n,j,C,k}(r,\theta)$ (listed in increasing order of degree) have at most two sign changes for any $\theta\in[0,2\pi]$.  It follows from Descartes' Rule of Signs that $f_{n,j,C,k}(r,\theta)$ has at most two zeros along any fixed ray emanating from the origin.

If $\Gamma_{n,j}(C,k)$ bounds a Jordan domain $\Omega$, then it must be the case that either $f_{n,j,C,k}(r,\theta)<0$ for all $r\eitheta\in\Omega$ or $f_{n,j,C,k}(r,\theta)>0$ for all $r\eitheta\in\Omega$.  The former possibility is easy to exclude. Indeed, such a Jordan domain would require $f$ to attain a local minimum somewhere in the complex plane. We note that $\Delta f_{n,j,C,k}(r,\theta)=-4$, so this is impossible.

Now we must exclude the possibility of a Jordan domain $\Omega$ that does not contain $0$, is bounded by $\Gamma_{n,j}(C,k)$,  and where $f_{n,j,C,k}(r,\theta)>0$ for all $r\eitheta\in\Omega$.  Suppose for contradiction that such an $\Omega$ exists.  Note that $f_{n,j,C,k}(0,\theta)=k>0$.  Let $\Omega_0$ be the connected component of $\{re^{i\theta}:f_{n,j,C,k}(r,\theta)>0\}$ that includes the origin.  If we start at the origin and travel along a ray that passes through $\Omega$, then that ray must leave $\Omega_0$, enter $\Omega$, and then leave $\Omega$ (since $\bar{\Omega}$ is compact).  This would require three zeros of $f_{n,j,C,k}(r,\theta)$ along that ray, which we have already seen is impossible.  Notice that this reasoning is still valid if $\Omega$ and $\Omega_0$ share a boundary point, since such a point would be a double zero of $f_{n,j,C,k}(r,\theta)$.
\end{proof}

We can actually strengthen the conclusion of Lemma \ref{surround} with the following proposition.

\begin{prop}\label{uniquesurr}
Given the same hypotheses as Lemma \ref{surround},  the set
    \[
    \Gamma_{n,j}(C,k)=\{z:\textrm{Re}\left[Cz^n+z^j\right]-|z|^2+k=0\}
    \] 
bounds at most one Jordan domain.
\end{prop}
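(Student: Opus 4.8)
The plan is to argue by contradiction, pushing the idea behind Lemma~\ref{surround} one step further: viewed as a function on $\bbC$, $f_{n,j,C,k}$ satisfies $\Delta f_{n,j,C,k}\equiv-4<0$, so it can have no local minimum anywhere in $\bbC$. Suppose then that $\Gamma_{n,j}(C,k)=\{f_{n,j,C,k}=0\}$ had two distinct bounded connected components $\gamma_1$ and $\gamma_2$, each surrounding the origin. Being connected components of a closed set, $\gamma_1$ and $\gamma_2$ are disjoint and compact, and since $f_{n,j,C,k}(0,\theta)=k>0$ neither contains the origin; for $i=1,2$ let $U_i$ be the bounded connected component of $\bbC\setminus\gamma_i$ containing $0$ (well defined, since $\gamma_i$ surrounds the origin).

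The first task is a purely topological reduction: after possibly interchanging $\gamma_1$ and $\gamma_2$, one has $\gamma_2\subseteq U_1$. Since $\gamma_2$ is connected and disjoint from $\gamma_1$, it lies in a single component $W$ of $\bbC\setminus\gamma_1$, and there are three cases. If $W=U_1$ we are already done. If $W$ is a bounded component of $\bbC\setminus\gamma_1$ other than $U_1$, then $\bbC\setminus W$ is connected and unbounded (the complement of any component of $\bbC\setminus\gamma_1$ is connected, being the union of $\gamma_1$ with the remaining components), is disjoint from $\gamma_2$, and contains $0\in U_1\neq W$; hence $0$ lies in the unbounded component of $\bbC\setminus\gamma_2$, contradicting $0\in U_2$. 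In the remaining case $W$ is the unbounded component of $\bbC\setminus\gamma_1$; then $\bbC\setminus W$ is a bounded connected set disjoint from $\gamma_2$ and containing both $\gamma_1$ and $0$, so $\bbC\setminus W\subseteq U_2$, i.e. $\gamma_1\subseteq U_2$, and we interchange the labels. Thus we may assume $\gamma_2\subseteq U_1$.

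Now regard $f_{n,j,C,k}$ as a real polynomial in $\Real z$ and $\Imag z$, hence continuous on the compact set $\overline{U_1}$. Let $m=\min_{\overline{U_1}}f_{n,j,C,k}$; since $\gamma_2\subseteq\overline{U_1}$ and $f_{n,j,C,k}\equiv0$ on $\gamma_2$, we have $m\le0$. If $f_{n,j,C,k}$ attained the value $m$ at some point of the open set $U_1$, that point would be a local minimum of $f_{n,j,C,k}$ in $\bbC$, which is impossible because $\Delta f_{n,j,C,k}=-4$, exactly as in the proof of Lemma~\ref{surround}. Hence the minimum is attained only on $\partial U_1\subseteq\gamma_1$, where $f_{n,j,C,k}=0$, so $m=0$; but then every point of $\gamma_2\subseteq U_1$ attains the minimum value $0$ while lying in the open set $U_1$, a contradiction. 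This finishes the proof.

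The analytic content is very soft here: all one needs is that $\Delta f_{n,j,C,k}=-4$ forbids interior local minima, which is borrowed verbatim from Lemma~\ref{surround}. The step that genuinely requires care is the topological reduction in the second paragraph — making precise, for two disjoint compact connected sets each separating the origin from $\infty$, that one of them is enclosed in the innermost loop of the other — and I expect that to be the main obstacle; it is handled above by repeatedly invoking that a complementary component of a compact connected planar set has connected complement.
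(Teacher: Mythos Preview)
Your proof is correct but follows a genuinely different route from the paper. The paper's argument is purely algebraic: it fixes a single angle $\theta^*$ at which both $\cos(n\theta^*)<0$ and $\cos(j\theta^*)<0$, so that the radial polynomial $f_{n,j,C,k}(r,\theta^*)$ has exactly one sign change in its coefficients and hence, by Descartes' rule of signs, exactly one positive root; since any bounded component surrounding the origin must meet every ray from the origin, two such components would force at least two positive roots along the ray $\theta=\theta^*$, a contradiction. Your argument instead exploits the strict superharmonicity $\Delta f_{n,j,C,k}=-4$ already used in Lemma~\ref{surround}, turning the problem into a minimum-principle statement on the region $\overline{U_1}$ enclosed by the outer component. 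The trade-off is clear: the paper's proof is shorter and avoids any topological subtlety, but it depends on the explicit polynomial form of $f$ (and tacitly on the existence of $\theta^*$, which is not entirely trivial); your approach requires the nesting lemma in your second paragraph but then applies verbatim to the zero set of \emph{any} strictly superharmonic function that is positive at the origin, which is a strictly more general conclusion.
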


\begin{proof}
    We consider the function $f_{n,j,C,k}(r,\theta)$ and observe that $f_{n,j,C,k}(0,\theta)>0$. Suppose $\Gamma_{n,j}(C,k)$ includes the boundary of a Jordan domain $\Omega$.  By Lemma \ref{surround}, we know that any Jordan domain bounded by $\Gamma_{n,j}(C,k)$ must include $0$ and we also saw in the proof of Lemma \ref{surround} that $\Gamma_{n,j}(C,k)$ cannot bound a region on which $f_{n,j,C,k}(r,\theta)<0$.  Thus, the only way $\Gamma_{n,j}(C,k)$ could bound a second Jordan domain is if there are two components of the set $\{r\eitheta:f_{n,j,C,k}(r,\theta)>0\}$ that share a common boundary curve (e.g. if $\Gamma_{n,j}(C,k)$ includes two concentric circles with $f_{n,j,C,k}$ positive between them).  Note that every point on this shared boundary curve would be a double zero of $f_{n,j,C,k}(r,\theta)$.  This would imply that for all those values of $\theta$, the function $f_{n,j,C,k}(r,\theta)$ has at least $3$ zeros, which contradicts Descartes' Rule of signs.
    %For it to include the boundary of another such domain, it would require that for all possible values $\theta$ the function $f_{n,j,C,k}(r,\theta)$ must have at least two positive zeros. However, notice that there must be some $\theta^*$ so that $f_{n,j,C,k}(r,\theta^*)$ has exactly one sign change.  By Descarte's rule of signs, $f_{n,j,C,k}(r,\theta^*)$ has exactly one positive zero, which yields the desired conclusion.
\end{proof}

The first set that we will consider is $\Gamma_{n,1}(C,k)$. In this case, we are able to determine the exact relationship between $C$ and $k$ for this set to bound a Jordan domain and we are able to show that this region is unique when it exists.

\begin{theorem}\label{gamman1}
    Suppose $C,k>0$ and $n\geq3$.  The set $\{z:\textrm{Re}[Cz^n+z]-|z|^2+k=0\}$ bounds a Jordan domain if and only if $C\leq C^*$ where
    \[
    C^*:=\frac{(2n-4)^n\left(4k(n-2)+\left((n-1)+\sqrt{(n-1)^2+4nk(n-2)}\right)\right)}{2(n-2)^2\left((n-1)+\sqrt{(n-1)^2+4nk(n-2)}\right)^n}
    \]
    If $C\in(0,C^*]$, then there is exactly one such Jordan domain.
\end{theorem}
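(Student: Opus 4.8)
The plan is to reduce the two-dimensional problem to a one-variable calculus problem by exploiting the extremal role of the direction $\theta=0$. Write $g(r):=f_{n,1,C,k}(r,0)=Cr^n+r-r^2+k$. The key elementary observation is that $f_{n,1,C,k}(r,\theta)\le g(r)$ for every $r\ge 0$ and every $\theta$, since $\cos(n\theta)\le 1$ and $\cos\theta\le 1$. Consequently, if $g(\rho)\le 0$ for some $\rho>0$, then the entire circle $\{|z|=\rho\}$ lies in $\{f_{n,1,C,k}\le 0\}$, so the connected component of $\{f_{n,1,C,k}>0\}$ containing the origin is contained in $\{|z|<\rho\}$ and is therefore bounded; its outer boundary is then a bounded connected component of the zero set. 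Conversely, if $g(r)>0$ for all $r\ge 0$, then the positive real axis lies entirely in $\{f_{n,1,C,k}>0\}$, so the origin's component is unbounded, and by Lemma~\ref{surround} there is no bounded connected component at all. Thus a bounded component exists if and only if $\min_{r>0}g(r)\le 0$.

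Second, I would pin down the shape of $g$: since $n\ge 3$, $g''(r)=n(n-1)Cr^{n-2}-2$ is strictly increasing on $(0,\infty)$ with $g''(0^{+})=-2<0$, so $g$ is concave then convex; combined with $g(0)=k>0$, $g'(0)=1>0$ and $g(r)\to\infty$, this forces $g$ to be increasing, then decreasing, then increasing, so that $\{r>0:g(r)\le 0\}$ is either empty, a single point, or a single closed interval $[\rho_a,\rho_b]$. In the latter cases the estimate $f_{n,1,C,k}(r,\theta)\le g(r)$ shows the whole (possibly degenerate) annulus $\{\rho_a\le|z|\le\rho_b\}$ lies in $\{f_{n,1,C,k}\le 0\}$, which separates the origin's $\{f_{n,1,C,k}>0\}$-component from the unbounded sectors of $\{f_{n,1,C,k}>0\}$ that persist for large $|z|$. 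Using $\Delta f_{n,1,C,k}\equiv -4$ (superharmonicity), exactly as in the proof of Lemma~\ref{surround}, one rules out bounded components of $\{f_{n,1,C,k}<0\}$ and shows the origin's $\{f_{n,1,C,k}>0\}$-component is simply connected; hence its boundary is the \emph{only} bounded component of the zero set, which gives uniqueness. (Equivalently, uniqueness follows from the Proposition together with Lemma~\ref{surround} once one checks, via the same superharmonicity argument, that every bounded component must surround the origin.)

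Third, I would solve $\min_{r>0}g(r)\le 0$ explicitly. This is equivalent to $Cr^n\le r^2-r-k$ for some $r>0$, i.e. to $C\le\psi(r):=(r^2-r-k)/r^n$ for some $r$; since $\psi$ vanishes at $r=\tfrac{1+\sqrt{1+4k}}{2}$, tends to $0^{+}$ as $r\to\infty$, and is positive in between, this is equivalent to $C\le\max_{r>0}\psi(r)$. Writing $\psi(r)=r^{2-n}-r^{1-n}-kr^{-n}$ gives $\psi'(r)=r^{-n-1}\bigl((2-n)r^2+(n-1)r+nk\bigr)$, whose only positive zero is the positive root $r^{*}$ of $(n-2)r^2-(n-1)r-nk=0$, namely $r^{*}=\tfrac{(n-1)+\sqrt{(n-1)^2+4n(n-2)k}}{2(n-2)}$; this is the global maximizer. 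Finally I would simplify $C^{*}=\psi(r^{*})$: from $(n-2)(r^{*})^2=(n-1)r^{*}+nk$ one gets $(r^{*})^2-r^{*}-k=\tfrac{r^{*}+2k}{n-2}$, and substituting $r^{*}=\tfrac{S}{2(n-2)}$ with $S:=(n-1)+\sqrt{(n-1)^2+4n(n-2)k}$ into $C^{*}=\bigl((r^{*})^2-r^{*}-k\bigr)/(r^{*})^n$ yields exactly the stated formula after clearing denominators.

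The sign analysis of $g$ and the closing algebraic simplification are routine; the only genuinely substantive step — and hence the main obstacle — is the reduction of the planar set $\Gamma_{n,1}(C,k)$ to the single radial profile $g(r)=f_{n,1,C,k}(r,0)$, i.e. recognizing that $\theta=0$ is simultaneously extremal for $\cos(n\theta)$ and $\cos\theta$ so that one "worst" slice controls boundedness of the origin's component, and then combining this with the superharmonicity of $f_{n,1,C,k}$ to extract uniqueness rather than mere existence.
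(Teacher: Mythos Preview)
Your reduction to the ray $\theta=0$, the resulting existence criterion $\min_{r>0}g(r)\le 0$, and the computation of $C^*$ via the critical point $r^*$ of $\psi(r)=(r^2-r-k)/r^n$ are exactly what the paper does; that part is correct and matches the paper's argument line for line.

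For uniqueness you take a genuinely different route. The paper argues ray by ray using Descartes' Rule of Signs: for each $\theta$ the polynomial $r\mapsto f_{n,1,C,k}(r,\theta)$ has at most two sign changes, hence at most two positive zeros, and then a short topological argument (their Figure~2) shows the second zero in each sector belongs to an unbounded arc rather than to a second bounded loop. You instead use the PDE structure: since $\Delta f\equiv -4$, the minimum principle forces the interior of any loop in $\{f>0\}$ to lie in $\{f>0\}$, so the origin's component $U$ is simply connected and $\partial U$ is a single connected bounded piece of the zero set. This is an elegant alternative, and the separation by the annulus $\{\rho_a\le|z|\le\rho_b\}$ is a nice touch the paper does not make explicit.

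There is, however, a gap in your uniqueness step. Simple connectedness of $U$ yields that $\partial U$ is \emph{one} bounded component of the zero set; it does not by itself show it is the \emph{only} one. Superharmonicity rules out bounded components of $\{f<0\}$, but it does not rule out a second bounded component $W$ of $\{f>0\}$ disjoint from $U$ (a second ``island'' in the sea $\{f<0\}$), whose boundary $\partial W$ would be another bounded component of the zero set. Your alternative via Proposition~2.2 and Lemma~\ref{surround} does not close this either: Lemma~\ref{surround} only asserts that \emph{some} bounded component surrounds the origin, not that \emph{every} one does, and the superharmonicity argument you cite does not upgrade this (a bounded zero-set component with $f>0$ inside and $f<0$ outside causes no contradiction with the minimum principle). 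What actually excludes such a $W$ is precisely the paper's Descartes count: a ray from the origin through any interior point of $W$ would cross the zero set at least three times (once leaving $U$, then entering and exiting $W$), contradicting the at-most-two bound. So to complete your proof you still need the ray-wise root count; your minimum-principle argument streamlines part of the picture but cannot replace Descartes entirely.
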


\begin{proof}
    The existence portion of the proof closely resembles the proof from \cite{FL17}.
    
%      \begin{figure}%[H]
%    \centering
%    \includegraphics[scale=0.5]{Zbar2.png}
%    \caption{\textit{A naive guess for what the level set of $f(r,\theta)$ might look like.}}
%    \label{label2}
%\end{figure}

    Let us think of $\theta$ as fixed and $f_{n,j,C,k}(r,\theta)$ as a polynomial in $r$. Notice that $f_{n,j,C,k}(0,\theta)=k>0$ and for any $\theta\in\bbR$ and $r\in(0,\infty)$ it holds that 
    $f_{n,j,C,k}(r,\theta)\leq f_{n,j,C,k}(r,0)$.  % so the picture of the level set of $f(r,\theta)$ cannot look like the picture in Figure 1 as above (for it would require zeros on the positive real axis).
    Therefore, if there is a value $r_*\in(0,\infty)$ for which $f(r_*,0)<0$, then this will imply existence of a bounded connected component of $\bbC\setminus\Gamma_{n,1}(C,k)$. Such an $r_*$ exists if and only if
    \begin{equation}\label{cmax}
    0<C\leq\max_{r\in(0,\infty)}\frac{r^2-r-k}{r^n}.
   \end{equation}
   By considering large values of $r$, we see that the maximum in \eqref{cmax} must be positive.  To calculate this maximum, we take the derivative with respect to $r$ to obtain
\[
\frac{r^n(2r-1)-nr^{n-1}(r^2-r-k)}{r^{2n}}=\frac{r^2(2-n)+r(n-1)+nk}{r^{n+1}}
\]
This equals zero if and only if 
\[
r^2(2-n)+r(n-1)+nk=0
\]
The unique positive root of this polynomial is equal to 
\[
r_*=\frac{(n-1)+\sqrt{(n-1)^2+4nk(n-2)}}{2(n-2)}
\]
Substituting $r_*$ back into our upper bound for $C$ we find that $f(r,0)$ attains a negative value if and only if 
\begin{align*}
C&<\frac{r_*^2-r_*-k}{r_*^n}\\&=\frac{(2n-4)^n\left(4k(n-2)+\left((n-1)+\sqrt{(n-1)^2+4nk(n-2)}\right)\right)}{2(n-2)^2\left((n-1)+\sqrt{(n-1)^2+4nk(n-2)}\right)^n}\\&=C^*
\end{align*}
Thus whenever $0<C<C^*$, there must be at least one Jordan domain bounded by $\Gamma_{n,1}(C,k)$.  By Lemma \ref{surround}, we know that $0$ is in this Jordan domain and that if $C>C^*$, then $\Gamma_{n,j}(C,k)$ does not bound a Jordan domain.  All that remains is to prove the statement about uniqueness and this follows from Proposition \ref{uniquesurr}. 

\end{proof}

To complete our analysis of the sets $\Gamma_{n,1}(C,k)$, we now consider the cases $n=1$ and $n=2$. This is the content of the next two propositions.

\begin{prop}\label{n1}
The set
\[
\{z:\textrm{Re}[Cz]-|z|^2+k=0\}
\] 
consists of a single connected component with the following properties:
\begin{itemize}
\item[1)] For $C=0$ the set is a circle centered at the origin.
\item[2)] For $C\not=0$ the set is a circle not centered at the origin.
\end{itemize}
\end{prop}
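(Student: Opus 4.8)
The plan is to put the defining equation into the standard form of a conic by completing the square. Writing $z=x+iy$ with $x,y\in\bbR$ and treating $C$ as a fixed real number, the relation $\operatorname{Re}[Cz]-|z|^2+k=0$ becomes
\[
Cx-x^2-y^2+k=0,
\]
which I would rearrange to
\[
\Bigl(x-\tfrac{C}{2}\Bigr)^2+y^2=k+\tfrac{C^2}{4}.
\]
Since $k>0$, the right-hand side is strictly positive for every value of $C$, so this is exactly the equation of a circle centered at the point $C/2$ on the real axis with radius $\sqrt{\,k+C^2/4\,}>0$. In particular the set is a nonempty, bounded, connected smooth curve, and being connected it has exactly one connected component; this is the first assertion of the proposition.

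The two itemized statements then follow by inspecting the center of this circle. When $C=0$ the center is the origin, so the set is the circle $\{z:|z|=\sqrt{k}\}$, which gives (1). When $C\neq0$ the set is a nonzero translate of a circle centered at the origin; since every circle is a (degenerate) ellipse, this gives (2). If one wishes to allow $C\in\bbC$, the identical computation yields $\bigl(x-\operatorname{Re}(C)/2\bigr)^2+\bigl(y+\operatorname{Im}(C)/2\bigr)^2=k+|C|^2/4$, so both the statement and its proof are unaffected.

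I do not anticipate any genuine obstacle: the argument is just the completion of a square. The only point that requires the hypothesis is the non-degeneracy of the conic, and this is precisely where $k>0$ is used, since $k+C^2/4>0$ rules out both the empty set and the degeneration to a single point; that is the one step I would be careful to state explicitly.
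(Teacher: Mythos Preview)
Your proof is correct and follows essentially the same route as the paper: rewrite the equation in Cartesian coordinates and recognize the resulting conic. You go slightly further by completing the square explicitly and using $k>0$ to verify the radius is positive, and you rightly note that for $C\neq0$ the set is still literally a circle (hence an ellipse only in the degenerate sense); the paper's proof simply writes $x^2+y^2-2Cx=k$ and names the conic without further comment.
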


\begin{proof}
    If $z=x+iy$, then $\textrm{Re}\left[Cz\right]=|z|^2-k$ may be equivalently written as 
    
    \[
    x^2+y^2-Cx=k.
    \]
 When $C=0$ this is the equation of a circle centered at $(0,0)$. If $C\not=0$, then this is the equation of a circle centered at $(C/2,0)$ and of radius $\sqrt{k+C^2/4}$.
\end{proof}

Notice that in the case $C\neq0$ of the previous theorem, the origin is inside the circle $\Gamma_{1,1}(C,k)$.

\begin{prop}\label{n2}
Suppose $k>0$.  The set $\Gamma_{2,1}(C,k)$ satisfies the following:
\begin{itemize}
\item[1)] For $C=0$ the set is a circle.
\item[2)] For $-1<C<1$ the set is an ellipse.
\item[3)] For $C=1$, then set is a parabola.
\item[4)] For $C=-1$, the set is a pair of vertical lines.
\item[5)] For $C<-1$ the set is a hyperbola.
\item[6)] For $C>1$ and $k\neq1/(4(C-1))$, the set is a hyperbola.
\item[7)] For $C>1$ and $k=1/(4(C-1))$, the set is a pair of lines.
\end{itemize}
Only in cases (1) and (2) does the set $\Gamma_{2,1}(C,k)$ bound a Jordan domain.
\end{prop}

\begin{proof}
    If $z=x+iy$, then $\textrm{Re}\left[Cz^2+z\right]=|z|^2-k$ may be equivalently written as 
    
    \[
    x^2(1-C)+y^2(1+C)-x=k
    \]
When $C=0$ this is the equation of a circle, and for all $C\in(-1,1)$ this is the equation of an ellipse. If $C=1$, then this becomes the equation of a parabola.  If $C=-1$, then the curve is a pair of vertical lines. If $C<-1$, then the equation can be rewritten
\[
(x\sqrt{1-C}-1/\sqrt{4-4C})^2+(1+C)y^2=k-1/(4C-4)
\]
from which we conclude the set is a hyperbola.  Cases (6) and (7) can be verified with similar calculations.
\end{proof}

We also include the following variant of Proposition \ref{n2}, in which the constant $C$ is multiplied by the function $z$ as well.

\begin{prop}\label{n2a}
Suppose $k>0$.  The set
\[
\{z:C\textrm{Re}[z^2+z]-|z|^2+k=0\}
\]
satisfies the following properties:
\begin{itemize}
\item[1)] For $C=0$ the set is a circle.
\item[2)] For $-1<C<1$ the set is an ellipse.
\item[3)] For $C=1$, then set is a parabola.
\item[4)] For $C=-1$, the set is a pair of vertical lines.
\item[5)] For $C<-1$ the set is a hyperbola.
\item[6)] For $C>1$ and $k\neq C^2/(4(C-1))$, the set is a hyperbola.
\item[7)] For $C>1$ and $k=C^2/(4(C-1))$, the set is a pair of lines.
\end{itemize}
Only in cases (1) and (2) does the set bound a Jordan domain.
\end{prop}

\begin{proof}
If $z=x+iy$, then $C\textrm{Re}\left[z^2+z\right]=|z|^2-k$ may be equivalently written as
\[
x^2+y^2-C(x^2-y^2+x)=k
\]
Each of the cases can be verified using calculations similar to those in the proof of Proposition \ref{n2}.
\end{proof}

Our next step is to consider the sets $\Gamma_{n,2}(C,k)$. In this case we are able to show that such sets never bound a Jordan domain.

\begin{theorem}
    If $C,k>0$, then the set $\Gamma_{n,2}(C,k)$ does not bound a Jordan domain.
\end{theorem}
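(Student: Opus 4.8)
The plan is to combine Lemma~\ref{surround} with a special feature of the exponent $j=2$, namely that the term $r^{j}\cos(j\theta)$ with $j=2$ can exactly cancel the $-r^{2}$ term in $f_{n,2,C,k}$ along a well-chosen ray. Since we are in the regime $j=2<n$, Lemma~\ref{surround} applies, so it suffices to prove that $\Gamma_{n,2}(C,k)$ has no bounded connected component that surrounds the origin.

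To locate the relevant ray, I would simply evaluate $f_{n,2,C,k}(r,\theta)$ at $\theta=0$. Because $\cos(2\cdot 0)=1$, the two quadratic terms cancel and
\[
f_{n,2,C,k}(r,0)=Cr^{n}\cos 0+r^{2}\cos 0-r^{2}+k=Cr^{n}+k,
\]
which is strictly positive for every $r\geq 0$ since $C,k>0$. Consequently the closed ray $\{t:t\geq 0\}$ (which includes the origin, where $f_{n,2,C,k}=k>0$) is entirely disjoint from $\Gamma_{n,2}(C,k)$.

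Finally, I would derive a topological contradiction. Suppose $\Gamma_{n,2}(C,k)$ contained a bounded connected component $S$ surrounding the origin. Then $0$ lies in a bounded component of $\bbC\setminus S$, while $\infty$ lies in the unbounded component because $S$ is bounded; hence $S$ separates $0$ from $\infty$. But the ray $\{t:t\geq 0\}$ is a connected subset of $\bbC$ joining $0$ to $\infty$, so it must intersect $S\subseteq\Gamma_{n,2}(C,k)$, contradicting the preceding paragraph. Therefore no bounded component surrounds the origin, and by Lemma~\ref{surround} the set $\Gamma_{n,2}(C,k)$ has no bounded connected component at all. I do not expect a genuine obstacle here; the only point that needs a little care is the separation step, i.e.\ the observation that a connected set joining $0$ to $\infty$ cannot avoid a bounded set that encircles the origin, and one might instead phrase this via the intermediate value theorem applied to $r\mapsto f_{n,2,C,k}(r,0)$ crossing from $k>0$ to $+\infty$ with no zero in between.
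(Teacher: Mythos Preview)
Your proof is correct and follows essentially the same approach as the paper: both invoke Lemma~\ref{surround} to reduce to ruling out a component encircling the origin, and both observe that $f_{n,2,C,k}(r,0)=Cr^{n}+k>0$ for all $r\geq 0$, so the positive real axis never meets $\Gamma_{n,2}(C,k)$. Your version spells out the topological separation argument in more detail than the paper, which simply states that the absence of a root along $\theta=0$ precludes a component surrounding the origin.
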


\begin{proof}
    By Lemma \ref{surround}, it suffices to show that $\Gamma_{n,2}(C,k)$ does not bound a Jordan domain that contains the origin.  Notice that $f_{n,2,C,k}(r,0)=Cr^n+k$ has no positive real roots. This precludes the possibility of a Jordan domain that includes the origin whose boundary is in $\Gamma_{n,2}(C,k)$.
\end{proof}

Now we turn to the most general case for the sets $\Gamma_{n,j}(C,k)$ where $n>j>2$. Here we are able to determine the relationship between $C$, $k$, $n$, and $j$ for the existence of a Jordan domain that contains the origin. We may suppose $j>2$ since the cases $j=1$ and $j=2$ have already been addressed.

\begin{theorem}
    If $C,k>0$ and $n>j>2$ are natural numbers, then the set $\Gamma_{n,j}(C,k)$ bounds a Jordan domain that contains the origin if and only if $C<C^*$ and $k<k^*$ where
    \[
    C^*:=\max\limits_{r\in(0,\infty)}\frac{r^2-r^j-k}{r^n}
    \]
    and
    \[
    k^*:=\left(1-\frac{2}{j}\right)\left(\frac{2}{j}\right)^{2/(j-2)}.
    \]
\end{theorem}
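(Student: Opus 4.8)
The plan is to use Lemma~\ref{surround} to reduce the statement to a one-variable question about the radial profile $h(r):=f_{n,j,C,k}(r,0)=Cr^{n}+r^{j}-r^{2}+k$, and then to extract the two thresholds by elementary calculus. The basic observation is that, since $C,r>0$ and $\cos(n\theta),\cos(j\theta)\le 1$,
\[
f_{n,j,C,k}(r,\theta)\ \le\ f_{n,j,C,k}(r,0)=h(r)\qquad(r>0,\ \theta\in\bbR);
\]
moreover $h(0)=k>0$ and $h(r)\to+\infty$ as $r\to\infty$. The heart of the proof is the claim that $\Gamma_{n,j}(C,k)$ has a bounded connected component if and only if $h(r)<0$ for some $r>0$; once this is shown, the translation into the stated inequalities is routine.

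For sufficiency, suppose $h(r_{*})<0$ for some $r_{*}>0$. Then $f_{n,j,C,k}(r_{*},\theta)\le h(r_{*})<0$ for every $\theta$, so the circle $\{|z|=r_{*}\}$ is disjoint from $\Gamma_{n,j}(C,k)$. Hence $\Gamma_{n,j}(C,k)$ splits as the disjoint union of $\Gamma_{n,j}(C,k)\cap\{|z|<r_{*}\}$ and $\Gamma_{n,j}(C,k)\cap\{|z|>r_{*}\}$, each open and closed in $\Gamma_{n,j}(C,k)$ and the first of them compact. This first piece is non-empty, because $f_{n,j,C,k}$ changes sign along the segment joining $0$ (where it equals $k>0$) to $r_{*}$ (where it is negative); it therefore contains a connected component of $\Gamma_{n,j}(C,k)$, which is bounded.

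For necessity, suppose $h(r)\ge 0$ for all $r\ge 0$, and assume toward a contradiction that $\Gamma_{n,j}(C,k)$ has a bounded component; by Lemma~\ref{surround} we may choose one, $\gamma$, that surrounds the origin. Being compact, connected and separating $0$ from $\infty$, $\gamma$ meets the positive real axis, so $h(\rho)=f_{n,j,C,k}(\rho,0)=0$ for some $\rho>0$. If $h>0$ on all of $(0,\infty)$, this is immediately absurd. Otherwise $h\ge 0$ on $(0,\infty)$ with $h(\rho)=0$, so $h'(\rho)=0$, and since $f_{n,j,C,k}$ is even in $\theta$ also $\partial_{\theta}f_{n,j,C,k}(\rho,0)=0$; thus $z=\rho$ is a critical point of $f_{n,j,C,k}$, necessarily singular for the level set since, as $\Delta f_{n,j,C,k}=-4$, it is not a local minimum and its Hessian has a strictly negative eigenvalue. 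A local analysis of $\Gamma_{n,j}(C,k)$ near this saddle point then shows that the component $\gamma$ extends into $\{|z|>\rho\}$ and out to arbitrarily large modulus, contradicting its boundedness. This establishes the claim.

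It remains to identify the thresholds. Since $r^{n}>0$, the condition ``$h(r)<0$ for some $r>0$'' is equivalent to ``$C<\psi(r)$ for some $r>0$'' with $\psi(r):=(r^{2}-r^{j}-k)/r^{n}$, i.e. to $C<\sup_{r>0}\psi(r)=C^{*}$. Because $\psi(r)\to-\infty$ as $r\to 0^{+}$ and $\psi(r)\to 0$ as $r\to\infty$ (every exponent occurring is negative, as $n>j>2$), $C^{*}>0$ precisely when $\psi$ is somewhere positive, i.e. when $k<\max_{r>0}(r^{2}-r^{j})$; and $\frac{d}{dr}(r^{2}-r^{j})=r(2-jr^{j-2})$ vanishes on $(0,\infty)$ only at $r_{0}=(2/j)^{1/(j-2)}$, a maximum, where $r_{0}^{2}-r_{0}^{j}=r_{0}^{2}(1-\tfrac{2}{j})=(1-\tfrac{2}{j})(\tfrac{2}{j})^{2/(j-2)}=k^{*}$. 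Since $C>0$, the condition $C<C^{*}$ forces $C^{*}>0$, that is $k<k^{*}$; so a bounded component exists if and only if $C<C^{*}$ and $k<k^{*}$, as claimed. The last computation, and the clopen-splitting step in the sufficiency argument, are routine. The step I expect to demand the most care is the necessity argument in the borderline case $C=C^{*}$: there $h$ only touches zero, and one has to confirm that $\Gamma_{n,j}(C,k)$ genuinely pinches at $\rho$ — fusing the incipient bounded loop with an unbounded branch — rather than closing up into a bounded curve, and it is this that forces the inequality in $C$ to be strict.
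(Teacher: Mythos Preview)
Your approach is essentially the same as the paper's: reduce via Lemma~\ref{surround} to the question of whether the radial profile $h(r)=f_{n,j,C,k}(r,0)$ takes a negative value on $(0,\infty)$, and then extract $C^{*}$ and $k^{*}$ by elementary calculus. Your treatment of the necessity direction and of the borderline case $C=C^{*}$ is in fact more careful than the paper's own proof, which glosses over that point (and is internally inconsistent about whether the inequality should be strict).
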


\begin{proof}
As in our earlier proofs, it suffices to determine if and when $f_{n,j,C,k}(r,0)$ becomes negative somewhere in $(0,\infty)$.  We calculate
\begin{align*}
    f(r,0)>0 &\iff\\
    Cr^n+r^j-r^2+k>0 &\iff\\
    C>\frac{r^2-r^j-k}{r^n}
\end{align*}
Thus $f(r,0)$ attains a negative value in $(0,\infty)$ if and only if
\[
C<\max\limits_{r\in(0,\infty)}\frac{r^2-r^j-k}{r^n}.
\]
Let us set
\[
R(r)=\frac{r^2-r^j-k}{r^n}.
\]
There is no Jordan domain containing $0$ and bounded by $\Gamma_{n,j}(C,k)$ if $R(r)<0$ for all $r\in(0,\infty)$.  We can determine precisely when this happens.

Notice $R(r)>0$ whenever $r^2-r^j-k>0$, so it suffices to determine when the maximum value of $r^2-r^j-k$ is positive.  Taking the derivative and setting it equal to zero, we find that the maximum value of $r^2-r^j-k$ is attained when $r=r_*$ with
\[
r_*:=\left(\frac{2}{j}\right)^{1/(j-2)}
\]
We evaluate $R(r_*)$ to obtain
\[
R(r_*)=\frac{\left(\frac{2}{j}\right)^{2/(j-2)}-\left(\frac{2}{j}\right)^{j/(j-2)}-k}{\left(\frac{2}{j}\right)^{n/(j-2)}}
\]
We conclude that $R(r_*)>0$ if and only if
\[
k<\left(1-\frac{2}{j}\right)\left(\frac{2}{j}\right)^{2/(j-2)}=k^*.
\]

%\hl{Don't we still need to calculate $\frac{d}{dr}R(r)$ to evaluate the bound for $C$ such that there exists a bounded connected component?}

%We turn to calculating the value $C^*$ by calculating $\max\limits_{r\in(0,\infty)}R(r)$ by taking the derivative with respect to $r$ to obtain
%\begin{align*}
%\frac{r^n\left(2r-jr^{j-1}\right)-nr^{n-1}\left(r^2-r^j-k\right)}{r^{2n}}&=\frac{r^j(n-j)-r^2(n-2)+nk}{r^{n+1}}
%\end{align*}
%This equals zero if and only if
%\[
%r^j(n-j)-r^2(n-2)+nk=0
%\]
%By Descarte's Rule of signs this polynomial has exactly two or zero positive roots. \hl{b/c Laplacian = -4, $\not\exists$ minimum $\Rightarrow$ no bounded connected component besides the one surrounding the origin.}

\end{proof}

For our final result of this section, we will strengthen a result of Fleeman and Lundberg from \cite{FL17}.  In Theorem \ref{lund+}, if we set $k=1$, then we recover the result from \cite{FL17}.  Our result is more general in that we allow for any $k>0$, provide necessary and sufficient conditions for the existence of a Jordan domain, and show that the Jordan domain - when it exists - is unique.

\begin{theorem}\label{lund+}
    For $k>0$ and $2<n\in\bbN$, the set $\{z:C\textrm{Re}[z^n]-|z|^2+k=0\}$ bounds a unique Jordan domain whenever 
    \[
    |C|\leq C^*
    \]
    where
    \[
    C^*:=\frac{2k}{n-2}\left(\frac{n-2}{nk}\right)^{n/2}.
    \]
If $|C|>C^*$, then the set $\{z:C\textrm{Re}[z^n]-|z|^2+k=0\}$ does not bound a Jordan domain.
\end{theorem}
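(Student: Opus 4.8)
The plan is to mirror the proof of Theorem~\ref{gamman1} and the Proposition immediately following Lemma~\ref{surround}, now applied to $f(r,\theta)=Cr^n\cos(n\theta)-r^2+k$ (we take $n\ge 3$, as the formula for $C^*$ requires). First I would reduce to the case $C\ge 0$: replacing $C$ by $-C$ is the substitution $\theta\mapsto\theta+\pi/n$, which merely rotates the set in question, changing neither its topology nor $|C|$ nor $C^*$; and when $C=0$ the set is the circle $|z|^2=k$, so the statement holds. Assuming $C>0$, record the basic inequality $f(r,\theta)\le f(r,0)=Cr^n-r^2+k$ for all $r\ge 0$, coming from $\cos(n\theta)\le 1$.

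For existence and its converse, the crucial elementary computation (as in the previous theorems of this section) is
\[
\max_{r\in(0,\infty)}\frac{r^2-k}{r^n}=\frac{2k}{n-2}\left(\frac{n-2}{nk}\right)^{n/2}=C^*,
\]
the maximum being attained at $r_*=\sqrt{nk/(n-2)}$. If $C\le C^*$, pick $\rho>0$ with $f(\rho,0)\le 0$ (take $\rho=r_*$ when $C=C^*$, and any $\rho$ with $(\rho^2-k)/\rho^n>C$ otherwise); then $f(\rho,\theta)\le f(\rho,0)\le 0$ for every $\theta$, so the circle $|z|=\rho$ lies in $\{f\le 0\}$. Since $f(0,\theta)=k>0$, the component $W$ of $\{f>0\}$ containing the origin is then trapped in the disk $|z|<\rho$, hence bounded, and $\partial W$ is a nonempty bounded subset of the level set. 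If instead $C>C^*$, the same computation gives $f(r,0)>0$ for all $r\ge 0$, so the whole positive real axis lies in $\{f>0\}$ and $W$ is unbounded; since $\Delta f=-4<0$, the superharmonic minimum principle rules out any bounded component of $\{f<0\}$, and hence (by the argument used to prove Lemma~\ref{surround}) any bounded component of the level set would have to surround the origin and bound a region on which $f>0$, i.e. a bounded component of $\{f>0\}$, which would force $W$ to be bounded — a contradiction. So a bounded component exists precisely when $|C|\le C^*$.

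For uniqueness when $|C|\le C^*$, I would repeat the argument of the Proposition following Lemma~\ref{surround}. Every bounded component surrounds the origin, so if there were two, every ray from the origin would meet both and $r\mapsto f(r,\theta)$ would have at least two positive zeros for all $\theta$. But choosing $\theta^*$ with $\cos(n\theta^*)<0$, the coefficients of $f(r,\theta^*)=C\cos(n\theta^*)r^n-r^2+k$ listed in increasing degree are $(k,0,-1,0,\dots,0,C\cos(n\theta^*))$, which has exactly one sign change, so Descartes' rule of signs forces exactly one positive zero — a contradiction. Hence the bounded component is unique.

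The step needing the most care is the topological bookkeeping rather than the algebra: one must justify rigorously that ``the level set has a bounded component'' is equivalent to ``$W$ is bounded'', handling possible singular points of the real-analytic curve and, at $C=C^*$, the degenerate point $(r_*,0)$, which is a saddle point of $f$ lying on the level set (there one uses that the circle $|z|=r_*$ is still contained in $\{f\le 0\}$). As in the paper's earlier proofs, I would carry this out by leaning on the superharmonicity of $f$ and the $n$-fold rotational symmetry of the configuration, producing a topologically faithful picture in the spirit of the earlier figures.
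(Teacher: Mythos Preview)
Your proof is correct and essentially matches the paper's: both reduce to $C>0$ by a rotation, locate $C^*$ by maximizing $(r^2-k)/r^n$ over $r>0$ (the maximum occurring at $r_*=\sqrt{nk/(n-2)}$), and then appeal to Descartes' rule of signs for uniqueness. The only real variation is in the uniqueness step: the paper restricts by symmetry to the sector $0\le\theta\le\pi/(2n)$ (where $\cos(n\theta)\ge0$) and argues that an additional bounded component there would force three positive roots of $r\mapsto f(r,\theta)$ for some such $\theta$, contradicting the two sign changes; you instead first assert that \emph{every} bounded component surrounds the origin and then pick $\theta^*$ with $\cos(n\theta^*)<0$ to force exactly one positive root. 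One small caution: Lemma~\ref{surround} and the Proposition after it only give that \emph{at least one} (respectively, at most one) bounded component surrounds the origin, not that all of them do, so the claim ``every bounded component surrounds the origin'' is not a direct citation but needs the superharmonicity argument you sketch in your final paragraph.
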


\begin{proof}
Define
\[
\Lambda_n(C,k):=\{z:C\textrm{Re}[z^n]-|z|^2+k=0\}
\]
and
\[
f(r,\theta):=C\cos(n\theta)r^n-r^2+k
\]
Without loss of generality we may assume that $C>0$ by the periodicity of cosine. Notice $f(0,\theta)$ is positive and $f(r,\theta)\leq f(r,0)$, so by Lemma \ref{surround}, we may determine when $\Lambda_n(C,k)$ bounds a Jordan domain by determining if there is a value $r_*\in (0,\infty)$ for which $f(r_*,0)\leq 0$. Such a condition is satisfied if and only if
\begin{equation}
0<C\leq\max_{r\in(0,\infty)}\frac{r^2-k}{r^n}
\end{equation}
We may determine the maximum of the right-hand side of this inequality by taking its derivative with respect to $r$ as follows:
\begin{align*}
    \frac{d}{dr}\left[\frac{r^2-k}{r^n}\right]&=\frac{2r^{n+1}-(r^2-k)(nr^{n-1})}{r^{2n}}=\frac{r^2(2-n)+nk}{r^{n+1}}
\end{align*}
The rational function above attains a value of zero if and only if $r^2(2-n)+nk=0$. This zero occurs at the value
\[
r_*:=\sqrt{\frac{nk}{n-2}}
\]
(recall $n>2$).  We now evaluate our upper bound of equation (2) with $r=r_*$ to determine that $\Lambda_n(C,k)$ bounds a Jordan domain that contains $0$ precisely when 
\[
C\leq\frac{r_*^2-k}{r_*^n}=\frac{2k}{n-2}\left(\frac{n-2}{nk}\right)^{n/2}=C^*.
\]
The uniqueness follows from Lemma \ref{surround} and Proposition \ref{uniquesurr}.
\end{proof}

In the next section we inspect the unique Jordan domain that includes the origin whose boundary is part of the set $\Lambda_4(C,k)$ and calculate the torsional rigidity of such a region as $C$ and $k$ vary.

\section{An Example}\label{ex}

As an exercise one may consider calculating the torsional rigidity of the Jordan domain bounded by the set $\Gamma_{4,4}(C,k)$ as described in Section 1. Recall that
\[
\Gamma_{4,4}(C,k)=\{z:\textrm{Re}[Cz^4+z^4]-|z|^2+k=0\}
\]
We may let $\hat{C}$ be $C+1$ so we have 
\begin{align*}
\Gamma_{4,4}(C,k)&=\{z:\textrm{Re}[\hat{C}z^4]-|z|^2+k=0\}\\
&=\Lambda_4(\hat{C},k)
\end{align*}
which satisfies the conditions of Theorem \ref{lund+}.  This means that $\Lambda_4(\hat{C},k)$ bounds a Jordan domain if and only if $\hat{C}\leq k\left(\frac{2}{4k}\right)^2=\frac{1}{4k}$. In this case, the boundary of that Jordan domain (call it $\Omega$) may be parameterized (in polar coordinates) by
\[
\left\{\left(\alpha,\theta\right),0\leq\theta\leq 2\pi\right\}
\]
where
\[
\alpha:=\left(\frac{1-\sqrt{1-4\hat{C}k\cos(4\theta)}}{2\hat{C}\cos(4\theta)}\right)^{1/2}.
\]

We can use this parametrization to calculate the torsional rigidity of this Jordan domain by evaluating the integral
\begin{equation}\label{rhoexp}
\int_\Omega|\bar{z}-f(z)|^2dA(z).
\end{equation}
where $f(z)=\frac{d}{dz}\left[F(z)\right]$ and $F(z)=\frac{\hat{C}z^4+k}{2}$. 
%Equivalently then, we may compute the double integral\newline
%$\Int_{0}^{2\pi} \Int_{0}^{\alpha} r^3\big(\cos\theta-i\sin\theta-2\hat{C}r^2(\cos\theta+i\sin\theta)^3\big)\big(\cos\theta+i\sin\theta-2\hat{C}r^2(\cos\theta-i\sin\theta)^3\big)\hspace{1mm}drd\theta$

\begin{figure}%[H]
    \centering
    \includegraphics[scale=0.8]{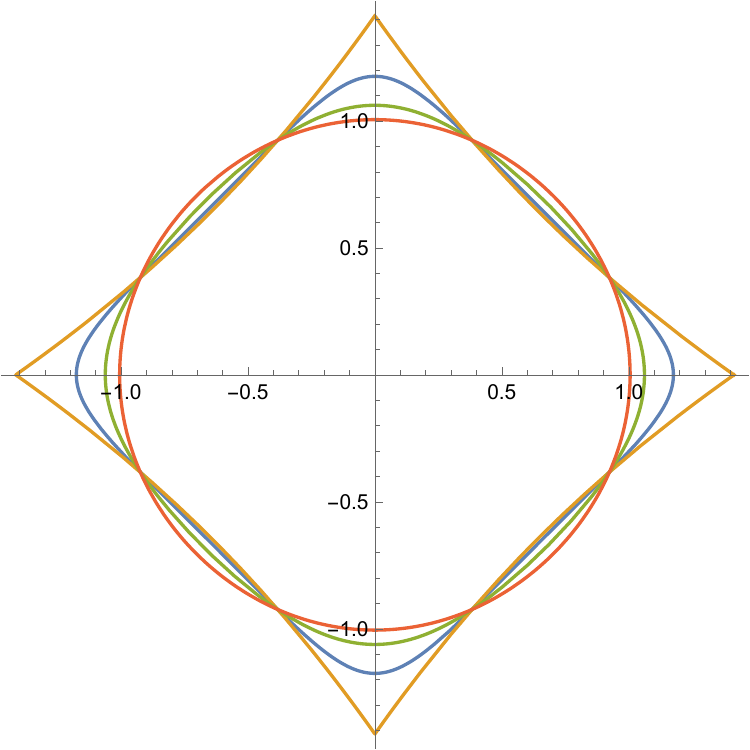}
    \caption{\textit{A plot of the set $\{(\alpha,\theta),0\leq\theta\leq 2\pi\}$ with $k=1$ for four different values of $\hat{C}$, ranging from $\hat{C}=\frac{1}{4k}$ at the outermost orange curve, $\hat{C}=\frac{1}{5k}$ in blue, $\hat{C}=\frac{1}{10k}$ in green, and $\hat{C}=\frac{1}{100k}$ as the nearly circular curve in red.}}
    \label{label5}
\end{figure}

When $\hat{C}=\frac{1}{4k},\frac{1}{5k},\frac{1}{10k},$ and $\frac{1}{100k}$, the integral evaluates to 1.63988$k^2$, 1.60815$k^2$, 1.57894$k^2$, and 1.57087$k^2$ respectively (rounding to 5 decimal places). Notice that this appears to be approaching $k^2\pi/2$, the torsional rigidity of a disk whose radius is $\sqrt{k}$, as $\hat{C}k$ approaches zero.  This is exactly what one would expect given that if we fix $k=1$ and send $\hat{C}\rightarrow0$, the Jordan domain bounded by $\Gamma_{4,4}(C,k)$ converges to the unit disk (see Figure 1).  We also see that for any fixed value of $\hat{C}k$, it holds that $\rho(\Omega)\rightarrow\infty$ quadratically as $k\rightarrow\infty$. 

When calculating torsional rigidity of Jordan domains, extremal problems often arise, which ask one to maximize the torsional rigidity of a region under various constraints.  We can consider such a problem in this setting by asking what value of $\hat{C}\in[0,(4k)^{-1}]$ maximizes the torsional rigidity of the Jordan domain bounded by $\Gamma_{4,4}(C,k)$ (we think of $k$ as a fixed value in this problem).  Numerical calculations show that torsional rigidity decreases as $\hat{C}\rightarrow 0$ (see Figure \ref{label6}).  It appears that the value of $\hat{C}$ that produces the Jordan domain with maximal torsional rigidity having boundary in the set $\Gamma_{4,4}(C,k)$ is the one where $\hat{C}$ attains its upper bound, $\hat{C}=\frac{1}{4k}$.  While we do not have a closed form expression for the torsional rigidity of the Jordan domain bounded by $\Gamma_{4,4}(C,k)$ for every $C$ and $k$, we recall that the integral \eqref{rhoexp} involves simple enough expressions that we have high confidence that the plot in Figure \ref{label6} gives an accurate description of what is true.%but for any fixed $\hat{C}=\mathlarger{\frac{1}{4k}}$ we may calculate the torsional rigidity explicitly with a simple quadratic expression in terms of $k$.
\begin{figure}%[H]
    \centering
    \includegraphics[scale=0.8]{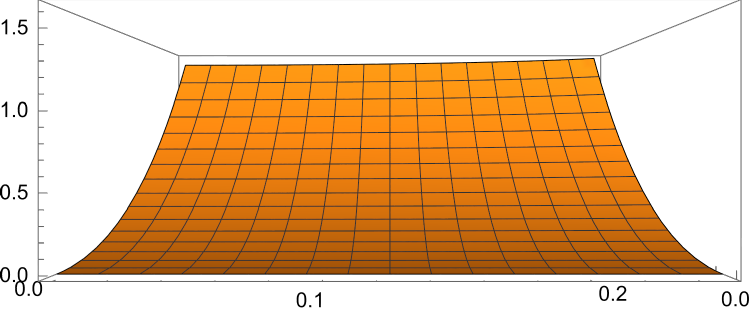}
    \caption{\textit{A 3-dimensional plot of the torsional rigidity $\int_\Omega|\bar{z}-f(z)|^2dA(z)$ with $\Omega$ the Jordan domain parametrized above. $\hat{C}$ ranges from $0$ to $\frac{1}{4k}$ and $k$ ranges from $0$ to $1$.}}
    \label{label6}
\end{figure}

%As seen in Figure \ref{label5}, it appears that as $\hat{C}\rightarrow 0$ while $k$ remains fixed, our bounded connected component described by the set $\{(\alpha,\theta),0\leq\theta\leq 2\pi\}$ approaches the equation of a circle of radius $\sqrt{k}$. This may be seen clearly by inspecting the set $\Gamma_{4,4}(C,k)$ as described in Section \ref{Intro} with $C=-1$, or equivalently $\hat{C}=0$.
%\begin{align*}
%\Gamma_{4,4}(C,k)&=\{z:\textrm{Re}[\hat{C}z^4]-|z|^2+k=0\}\\
%&=\{z:|z|^2=k\}
%\end{align*}

\bigskip

\noindent\textbf{Acknowledgements.}  The authors are grateful for the input of Dmitry Khavinson, who encouraged us to pursue this line of research.  The second author also gratefully acknowledges support from the Simons Foundation through collaboration grant 707882.


\begin{thebibliography}{}










\bibitem{FK} M. Fleeman and D. Khavinson, {\em Approximating $\bar{z}$ in the Bergman space}, in \textit{Recent progress on operator theory and approximation in spaces of analytic functions}, Contemp. Math., 679 (2016), 79--90.

\bibitem{FL17} M. Fleeman and E. Lundberg, {\em The Bergman analytic content of planar domains}, Comput. Methods Funct. Theory 17 (2017), no. 3, 369--379.

\bibitem{Pol} G. P\'{o}lya, {\em Torsional rigidity, principal frequency, electrostatic capacity and symmetrization}, Quart. Appl. Math. 6 (1948), 267--277.








\end{thebibliography}
\end{document}